\newtheorem{theorem}{Theorem}[section]
\newtheorem{lemma}[theorem]{Lemma}
\theoremstyle{definition}
\DeclareSymbolFont{bbgreek}{U}{bbold}{m}{n} 
\DeclareMathSymbol{\bbalpha}{\mathbb}{bbgreek}{'13}
\DeclareMathSymbol{\bbbeta}{\mathbb}{bbgreek}{'14}
\DeclareMathSymbol{\bbchi}{\mathbb}{bbgreek}{'37}
\DeclareMathSymbol{\bbdelta}{\mathbb}{bbgreek}{'16}
\DeclareMathSymbol{\bbepsilon}{\mathbb}{bbgreek}{'17}
\DeclareMathSymbol{\bbphi}{\mathbb}{bbgreek}{'36}
\DeclareMathSymbol{\bbgamma}{\mathbb}{bbgreek}{'15}
\DeclareMathSymbol{\bbeta}{\mathbb}{bbgreek}{'21}
\DeclareMathSymbol{\bbiota}{\mathbb}{bbgreek}{'23}
\DeclareMathSymbol{\bbkappa}{\mathbb}{bbgreek}{'24}
\DeclareMathSymbol{\bblambda}{\mathbb}{bbgreek}{'25}
\DeclareMathSymbol{\bbmu}{\mathbb}{bbgreek}{'26}
\DeclareMathSymbol{\bbnu}{\mathbb}{bbgreek}{'27}
\DeclareMathSymbol{\bbpi}{\mathbb}{bbgreek}{'31}
\DeclareMathSymbol{\bbtheta}{\mathbb}{bbgreek}{'22}
\DeclareMathSymbol{\bbrho}{\mathbb}{bbgreek}{'32}
\DeclareMathSymbol{\bbsigma}{\mathbb}{bbgreek}{'33}
\DeclareMathSymbol{\bbvarsigma}{\mathbb}{bbgreek}{'20}
\DeclareMathSymbol{\bbtau}{\mathbb}{bbgreek}{'34}
\DeclareMathSymbol{\bbupsilon}{\mathbb}{bbgreek}{'35}
\DeclareMathSymbol{\bbomega}{\mathbb}{bbgreek}{'177}
\DeclareMathSymbol{\bbxi}{\mathbb}{bbgreek}{'30}
\DeclareMathSymbol{\bbpsi}{\mathbb}{bbgreek}{'40}
\DeclareMathSymbol{\bbzero}{\mathbb}{bbgreek}{'60}
\DeclareMathSymbol{\bbunit}{\mathbb}{bbgreek}{'61}
\DeclareMathSymbol{\bbAlpha}{\mathbb}{bbgreek}{'101}
\DeclareMathSymbol{\bbBeta}{\mathbb}{bbgreek}{'102}
\DeclareMathSymbol{\bbChi}{\mathbb}{bbgreek}{'130}
\DeclareMathSymbol{\bbDelta}{\mathbb}{bbgreek}{'1}
\DeclareMathSymbol{\bbEpsilon}{\mathbb}{bbgreek}{'105}
\DeclareMathSymbol{\bbPhi}{\mathbb}{bbgreek}{'10}
\DeclareMathSymbol{\bbGamma}{\mathbb}{bbgreek}{'0}
\DeclareMathSymbol{\bbEta}{\mathbb}{bbgreek}{'110}
\DeclareMathSymbol{\bbIota}{\mathbb}{bbgreek}{'111}
\DeclareMathSymbol{\bbKappa}{\mathbb}{bbgreek}{'113}
\DeclareMathSymbol{\bbLambda}{\mathbb}{bbgreek}{'3}
\DeclareMathSymbol{\bbMu}{\mathbb}{bbgreek}{'115}
\DeclareMathSymbol{\bbNu}{\mathbb}{bbgreek}{'116}
\DeclareMathSymbol{\bbO}{\mathbb}{bbgreek}{'117}
\DeclareMathSymbol{\bbPi}{\mathbb}{bbgreek}{'5}\DeclareMathSymbol{\bbTheta}{\mathbb}{bbgreek}{'2}
\DeclareMathSymbol{\bbRho}{\mathbb}{bbgreek}{'120}
\DeclareMathSymbol{\bbSigma}{\mathbb}{bbgreek}{'6}
\DeclareMathSymbol{\bbTau}{\mathbb}{bbgreek}{'124}
\DeclareMathSymbol{\bbUpsilon}{\mathbb}{bbgreek}{'131}
\DeclareMathSymbol{\bbOmega}{\mathbb}{bbgreek}{'12}
\DeclareMathSymbol{\bbXi}{\mathbb}{bbgreek}{'4}
\DeclareMathSymbol{\bbPsi}{\mathbb}{bbgreek}{'11}
\DeclareMathSymbol{\bbZeta}{\mathbb}{bbgreek}{'132}
\newcommand{\ignore}[1]{}
\newcommand{\standardfigsized}[3]{
	\begin{figure}
		\centerline{\includegraphics[#3]{figures/#1}}
		\ignore{    \caption{#2 Reference \verb{#1}.}\label{#1}     }
		\caption{#2}\label{#1}
	\end{figure}
}
\definecolor{ComColor}{rgb}{0,.5,.0}
\begin{document}

\title{Arc numbers from {G}auss diagrams}
\author{Tobias Hagge}
\begin{abstract}
We characterize planar diagrams which may be divided into $n$ arc embeddings in terms of their chord diagrams, generalizing a result of Taniyama for the case $n = 2$. Two algorithms are provided, one which finds a minimal arc embedding (in quadradic time in the number of crossings), and one which constructs a minimal subdiagram having same arc number as $D$.
\end{abstract}

\maketitle

\section{introduction}
An {\em undecorated chord diagram} $D = (C, H, X)$ consists of an oriented circle $C$, a set $H \subset C$ {\em half-crossings} having finite, even cardinality, and a partition $X$ of $H$ into two element sets, called {\em (full) crossings}. Given another undecorated chord diagram $D' = (C',H',X')$, $D$ and $D'$ are equivalent as undecorated chord diagrams (write $D \sim D'$) if there is an orientation-preserving homeomorphism $\phi: C \to C'$ such that $\{\phi(x)| x \in X\}= X'$. We say $D'$ is a {\em subdiagram} of $D$ if $X' \subset X$ and $H' = \cup X'$.

In an undecorated chord diagram $D=(C,H,X)$, a {\em regular point} $x$ is an element $x \in C \backslash H$. A {\em regular arc} $r$ is an open arc $r \subset C$ such that $\partial r \subset C \backslash H$.  Then $r$ is {\em properly embedded} if it contains no full crossings. An {\em embedded partition} of $D$ is a finite set of regular points, called {\em cut points}, such that the components of the complement are properly embedded arcs. Two embedded partitions of $D$ are equivalent if they differ by an isotopy of $C$ which fixes half-crossings. An arc embedding of $D$ is {\em minimal} if $D$ has no embedded partition with fewer properly embedded arcs. The {\em arc number} of $D$ is the number of arcs in a minimal embedded partition, or, equivalently, the number of cut points.

Arc number first appeared in \cite{Hotz}, where it was shown that every knot has a diagram with arc number two. The result was used to provide a characterization of knot groups in terms of direct products of free groups. The same result was rediscovered in \cite{Ozawa} as part of the construction of a knot invariant. It was used in \cite{A-S-T} to show that every knot contains a diagram containing at most four odd-sided polygons. In \cite{Taniyama}, Taniyama gave a classification of planar diagrams of arc number two in terms of their Gauss diagrams. In (\cite{Taniyama2}) he conjectured the correct classification for arc number three. The present work completes this classification for arbitrary arc number and provides an efficient method for finding minimal embedded partitions for a given diagram.

Two regular arcs $r,r'$ are equivalent as regular arcs if there is a regular arc $r''$ containing both $r$ and $r'$ such that each of $r, r', r'',$ and $r \cap r'$ contain the same set of half-crossings. For example, in a diagram with one crossing there are six equivalence classes of nonempty arcs. Up to arc equivalence, each regular arc is specified by an ordered pair of half crossings $(b(r),f(r))$. The {\em front boundary} $f(r)$ is the hindmost half-crossing in $C\backslash r$, using the preferred orientation; similarly, the {\em back boundary} $b(r)$ is the foremost half-crossing in $C \backslash r$. 

A regular arc $r'$ is a {\em front extension} of a regular arc $r$ if $b(r) = b(r')$ and $r \subset r'$. The extension is {\em proper} if $r \nsim r'$.  A properly embedded regular arc $r$ is an {\em f arc} if no proper front extension of $r$ is properly embedded. Given two inequivalent f-arcs $r$ and $r'$, one may be properly contained in another. In this case $f(r) = f(r')$.

In the sequel, regular arcs and embedded partitions are considered only up to equivalence. For computational purposes each open arc $(h_i, h_{i+1})$ between adjacent half-crossings is identified with a single point, and computations are performed in the quotient topology. For the exposition, however, the language of chord diagrams is retained.

By convention, when points in a finite set $X \subset C$ are expressed with numbered subscripts, the subscripts are defined modulo $\lvert X \rvert$. The cyclic ordering of the subscripts indicates the ordering of the points on $C$, respecting the orientation.

\section{Finding the arc number of an undecorated chord diagram}

\begin{lemma}\label{lmm_mnml_mxml}
Every undecorated chord diagram $D$ has a minimal embedded partition in which at most one component is not an f-arc.
\end{lemma}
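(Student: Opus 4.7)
The plan is to start with any minimal embedded partition of $D$ and successively slide its cut points forward (in the preferred orientation of $C$) until all but one of the resulting arcs are f-arcs. Write $c_1, c_2, \ldots, c_n$ for the cut points in cyclic order and $r_i = (c_i, c_{i+1})$ for the arc following $c_i$, with indices mod $n$. I would process the cut points $c_1, c_2, \ldots, c_{n-1}$ one at a time, each time sliding $c_i$ forward as far as possible without making some arc fail to be properly embedded.

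Sliding $c_i$ forward lengthens $r_{i-1}$ on the front side and shortens $r_i$ on the back side, while leaving every other arc unchanged. Shortening automatically preserves proper embedding, so the only local obstruction is a half-crossing $h$ about to enter $r_{i-1}$ whose partner is already in $r_{i-1}$; stopping just before such a blocking $h$ is exactly what makes $r_{i-1}$ an f-arc, since any proper front extension of $r_{i-1}$ must contain $h$ and hence cannot be properly embedded. Because the front boundaries of $r_0, r_1, \ldots, r_{i-2}$ sit at the untouched cut points $c_1, \ldots, c_{i-1}$, the f-arc property established in previous steps is preserved by the current step. After the whole process, $r_0, \ldots, r_{n-2}$ are all f-arcs, leaving only $r_{n-1}$ possibly not an f-arc; the cases $n \le 1$ are immediate.

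The main thing to verify, and the only real obstacle, is that the sliding process never drags $c_i$ all the way to $c_{i+1}$, which would reduce the number of cut points and contradict minimality of the starting partition. The argument I have in mind runs by contrapositive: if $c_i$ could reach a position arbitrarily close to $c_{i+1}$ without any blocking half-crossing being encountered, then the enlarged arc would contain every half-crossing of the original $r_{i-1}$ and $r_i$ with no two of them paired. Deleting the cut point $c_i$ altogether would then merge $r_{i-1}$ and $r_i$ into a single properly embedded arc, yielding an embedded partition with only $n-1$ cut points and contradicting minimality. Hence the slide always stops at a blocking half-crossing and produces an f-arc, as required.
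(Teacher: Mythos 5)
Your proposal is correct and is essentially the paper's own argument: fix one cut point, slide the remaining cut points forward to make each successive arc front-maximal (hence an f-arc), and note that a slide reaching the next cut point would merge two arcs into a properly embedded one, contradicting minimality. You simply spell out the blocking-half-crossing and minimality details that the paper leaves implicit.
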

\begin{proof}
Let $(p_0, \ldots, p_{n-1})$ be a minimal embedded partition. Let $q_0 = p_0$. Continuing around the circle for $i \in [1,n-2]$, each $q_i$ is given by sliding $p_i$ in the direction of the preferred orientation so that the arc $(q_{i-1},q_i)$ is front maximal. It is not possible to slide $p_i$ past $p_{i+1}$, as this would contradict minimality. The result is a minimal embedded partition in which all components are f-arcs, except perhaps $(q_{n-1},q_{0})$.
\end{proof}

\begin{theorem}\label{thm_cmpttn}
The arc number of an undecorated chord diagram $D=(C,\{h_1, \ldots, h_{2n}\}, X)$ may be computed in $O(n^2)$ time.
\end{theorem}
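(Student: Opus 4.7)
The plan is to reduce arc number computation to a greedy iteration enabled by a precomputed lookup table, with correctness supplied by Lemma~\ref{lmm_mnml_mxml}.

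First I would precompute a table $\mathrm{fnext}$ indexed by the $2n$ equivalence classes of regular points (one per open arc between consecutive half-crossings). For each such point $p$, let $\mathrm{fnext}(p)$ be the farthest-forward regular point $q$ such that the open arc from $p$ to $q$ in the preferred orientation contains no full crossings; equivalently, the front endpoint of the f-arc based at $p$. To compute $\mathrm{fnext}(p)$, walk forward from $p$ while maintaining the set of half-crossings seen, stopping just before encountering a half-crossing whose partner is already in the set. Each such computation visits $O(n)$ half-crossings, so building the whole table costs $O(n^2)$.

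Next, for each of the $2n$ candidate starting cut points $q_0$, iterate $q_{i+1} = \mathrm{fnext}(q_i)$ until reaching some $q_k$ such that the f-arc based at $q_k$ would overshoot $q_0$; then close the partition with the sub-arc from $q_k$ forward to $q_0$, which is properly embedded since it lies inside an f-arc. This yields an embedded partition; record its size, and return the minimum over all choices of $q_0$. Since $\mathrm{fnext}$ strictly advances along $C$, each iteration performs at most $2n$ table lookups, giving $O(n^2)$ for this phase and $O(n^2)$ in total.

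Correctness follows from Lemma~\ref{lmm_mnml_mxml}: every greedy partition produced is valid, so the minimum count is at most the arc number; conversely, the lemma provides a minimal partition in which every component except possibly one is an f-arc, and seeding the greedy procedure with the starting cut point of such a partition reproduces a partition of the same (minimal) size. The main obstacle I foresee is the bookkeeping around the wraparound arc --- verifying that the stopping criterion ``the next f-arc contains $q_0$'' correctly identifies the last cut point, and that the resulting closing sub-arc is properly embedded --- but both follow from the defining property of the f-arc based at $q_k$, together with the fact that $\mathrm{fnext}$ advances along $C$ by at least one regular-arc segment per step.
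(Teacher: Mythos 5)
Your proposal is correct and follows essentially the same route as the paper: try a cut point on each of the $2n$ regular arcs, greedily extend f-arcs around the circle, and invoke Lemma~\ref{lmm_mnml_mxml} to guarantee that one of these greedy partitions is minimal, for $O(n^2)$ total. The precomputed $\mathrm{fnext}$ table is only an implementation detail (the paper instead builds each greedy partition directly in $O(n)$), so the two arguments are substantively the same.
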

\begin{proof}
For each regular arc $r = (h_i,h_{i+1})$ in $D$, construct an embedded partition $P = \{q_0, \ldots, q_{n-1}\}$ by placing the first cut point $p_0$ on $r$ and then constructing a sequence of f-arcs, stopping when we reach $p_0$ again. This takes $O(n)$ time. By construction, at least one such $P$ is equivalent to a partition constructed in Lemma~\ref{lmm_mnml_mxml}, and is therefore minimal.
\end{proof}

A {\em star} $S_{t, a}$ for $a \in \mathbb{N}, t \in \mathbb{N}^+$ is an undecorated chord diagram with $1 + (a+1) t$ crossings, these being of the form $c_j = \{h_j, h_{j+ 2 t - 1}\}$, where $j$ ranges over the values in $[0,1+2 (a+1) t]$ having a fixed parity. Up to equivalence $S_{t,a}$ depends only on $t$ and $a$. Some examples of stars are given in Figure~\ref{stars}.

\standardfigsized{stars}{Some stars. First row: $S_{1,1}, S_{2,1}, S_{3,1}$. Second row: $S_{1,2}, S_{2,2}, S_{3,2}$.}{width=6in}

A {\em star ordering} of $S_{t,a}$ is the ordering $(c_j, c_{j+2t}, \ldots, c_{j+2 (a+1) t t})$ starting with some $j$. Given such, for $n<1 + (a+1)t$ let $S_{t,a}^n$ be the proper subdiagram consisting of the first $n$ crossings in $S_{t,a}^n$. If $t \le t'$ then it is easy to see that $S_{t,a}^{(a+1) t} \cong S_{t',a}^{(a+1) t}$. Thus one may ignore $t$ and write $S_a^n$ without ambiguity. Note that  $S_a^n$ is not a star unless $n \le a+1$, in which case $S_a^{n} = S_{1,n-1}$.

\begin{lemma}\label{lmm_str_nmbr}
The star $S_{t,a}$ has arc number $a+2$.
\end{lemma}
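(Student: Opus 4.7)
The plan is to establish matching upper and lower bounds on the arc number.

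For the lower bound, I would show that every properly embedded arc in $S_{t,a}$ contains at most $2t$ half-crossings. The key observation is that in $S_{t,a}$ every chord has endpoints at cyclic distance $2t-1$, and every half-crossing of the fixed parity used to index the $c_j$ is itself a chord endpoint. Given any $2t+1$ cyclically consecutive half-crossings $h_i, h_{i+1}, \ldots, h_{i+2t}$, consider the two pairs $(h_i, h_{i+2t-1})$ and $(h_{i+1}, h_{i+2t})$: both have cyclic distance $2t-1$, and exactly one of $i,i+1$ has the correct parity, so that pair is one of the $c_j \in X$. Hence any properly embedded arc has at most $2t$ half-crossings, and since $S_{t,a}$ has $N = 2(a+1)t+2$ half-crossings total, any embedded partition has at least $\lceil N/(2t) \rceil = \lceil (a+1)+1/t \rceil = a+2$ components.

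For the upper bound, I would exhibit an explicit partition into $a+2$ arcs. Assuming $a \ge 1$, take
\[
A_k = \{h_{2kt+1}, h_{2kt+2}, \ldots, h_{2(k+1)t}\} \quad (k = 0,1,\ldots,a), \qquad A_{a+1} = \{h_{2(a+1)t+1}, h_0\}.
\]
Each $A_k$ has length $2t$ starting at the odd index $2kt+1$, so its unique internal pair at distance $2t-1$ is $(h_{2kt+1}, h_{2(k+1)t})$; the only candidate chord containing both endpoints is $c_{2(k+1)t}$, and a quick check that $3t-2 \not\equiv 0 \pmod{N}$ when $a \ge 1$ shows this pair is not in $X$. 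The final two-element arc $A_{a+1}$ is trivially properly embedded for $t \ge 2$ and verified by hand for $t=1$. The edge case $a=0$, if included in the convention for $\mathbb{N}$, is dispatched separately: there is at least one chord, forcing arc number $\ge 2$, and a balanced split into two arcs of length $t+1$ (after a trivial shift when $t=1$) realizes arc number $2 = a+2$.

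The main obstacle is controlling the ``long-side'' chord distance $N-(2t-1) = 2at+3$ alongside the short-side distance $2t-1$. For the length-$2t$ arcs in the construction to be properly embedded, one must ensure neither distance is realized inside the arc. When $a \ge 1$ the long-side distance strictly exceeds the diameter $2t-1$ of each $A_k$, so only the short-side parity/divisibility argument is needed; when $a=0$ the two distances collapse together and the argument must be handled specially. The parity and divisibility bookkeeping at the boundaries between arcs is the only delicate part; the rest is routine.
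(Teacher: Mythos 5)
Your argument in the main case $a\ge 1$ is correct, but it takes a genuinely different route from the paper. The paper deduces the lemma from its greedy machinery: by Lemma~\ref{lmm_mnml_mxml} and Theorem~\ref{thm_cmpttn} a minimal partition occurs among the greedy f-arc partitions, and the rotational symmetry $c_j\mapsto c_{j+2}$ cuts the starting positions down to two, each of which closes up after $a+1$ obstructing crossings plus one final arc. Your proof is instead self-contained: the pigeonhole lower bound (among any $2t+1$ consecutive half-crossings, one of the two pairs at index distance $2t-1$ has the distinguished parity and so is a crossing, hence a properly embedded arc carries at most $2t$ half-crossings and any partition has at least $\lceil N/(2t)\rceil=a+2$ arcs) together with the explicit blocks $A_0,\dots,A_{a+1}$. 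This is more elementary and independent of Theorem~\ref{thm_cmpttn}, whereas the paper's version exercises exactly the construction that later drives the induction in the main theorem. Two small repairs: the divisibility check should be $4t-2\not\equiv 0 \pmod{N}$ (the partner of $h_{2(k+1)t}$ is $h_{2(k+1)t+2t-1}$), not $3t-2$ --- harmless, since both lie strictly between $0$ and $N$ when $a\ge 1$ --- and you should state that you normalize the distinguished parity to be even, since with odd parity the pair $\{h_{2kt+1},h_{2(k+1)t}\}$ \emph{is} the crossing $c_{2kt+1}$ and your blocks must be shifted by one.

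The one genuine error is in the $a=0$ aside: the claim that $S_{t,0}$ has arc number $2$ via a balanced split fails for $t\ge 4$. When $a=0$ one has $N=2t+2$, so each chord $\{h_j,h_{j+2t-1}\}$ joins half-crossings at cyclic distance $3$; among any five consecutive half-crossings one of the two distance-$3$ pairs has the right parity and is a chord, so a properly embedded arc contains at most four half-crossings and the arc number is at least $\lceil (2t+2)/4\rceil=3$ once $t\ge 4$. Thus no two-arc split exists, balanced or not; the statement itself is false for those parameters, so no choice of split can be ``verified.'' This does not touch the substance of the paper (the theorem invoking the lemma assumes $a>0$, and the paper's own proof already presupposes $a\ge 1$ through $S_{1,a-1}$), but you should either restrict the edge case to $t\le 3$ or simply exclude $a=0$ rather than assert a partition that does not exist.
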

\begin{proof}
The star $S_{t,a}$ is symmetric under a rotation that sends $c_j$ to $c_{j+2}$. Thus one need only construct two embedded partitions to find a minimal one, starting either just before or just after the back half of a crossing. Consider the constructed partitions beginning just before $b(c_0)$ and just after $b(c_{a+1})$. In both cases the first $a+1$ obstructing crossing are the same, forming $S_a^{a+1} \cong S_{1,a-1}$. A single additional arc completes each partition, and both are minimal. \end{proof}

For every f-arc $m$, there is a unique {\em obstructing crossing} $c(m)$ that prevents further front extension. Since $m$ has a preferred orientation, $c_m$ inherits {\em front} (obstructing) and {\em back} half-crossings $f(c(m))$ and $b(c(m))$ respectively from $m$, as well as a preferred properly embedded regular arc $r(c(m)) \sim (b(c(m)),f(c(m))) \subset m$.

If two disjoint f-arcs have the same obstructing crossing, then the diagram has arc number two. In any case, f-arc components $m_1 \ne m_2$ of an embedded partition have disjoint $r(c(m_1))$ and $r(c(m_2))$. More generally, given a pair of disjoint f-arcs $m$ and $m'$, $c(m)$ is never properly contained in $r(c(m'))$.

\begin{theorem}
For $a>0$, an undecorated chord diagram $D$ has arc number $\ge a+2$ iff it contains $S_{t,a}$ as a subdiagram for some $t \in \mathbb{N}^+$.
\end{theorem}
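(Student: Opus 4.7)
For the easy direction $(\Leftarrow)$, any embedded partition of $D$ is automatically an embedded partition of any subdiagram $D' \subset D$ with the same number of arcs, since a properly embedded arc of $D$ is still properly embedded in $D'$ (deleting crossings can only remove full crossings, never create them). Combined with Lemma~\ref{lmm_str_nmbr}, which gives that $S_{t,a}$ has arc number $a+2$, this yields arc number at least $a+2$ for $D$ whenever $D$ contains $S_{t,a}$ as a subdiagram.

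For the hard direction $(\Rightarrow)$, assume the arc number of $D$ is $n \ge a+2$. I would invoke Lemma~\ref{lmm_mnml_mxml} to fix a minimal partition $(q_0, \ldots, q_{n-1})$ of $D$ in which $m_0, \ldots, m_{n-2}$ are f-arcs with obstructing crossings $c_i = c(m_i)$, and at most $m_{n-1}$ fails to be an f-arc. Recall that $b(c_i) \in m_i$, $f(c_i) \in m_{i+1}$, and the arcs $r(c_i) \subset m_i$ are pairwise disjoint. My plan is to produce $S_{t,a}$ as the subdiagram of $D$ spanned by the obstructing chords $c_0, \ldots, c_{n-2}$, together with a merging-blocking chord for the non-f-arc gap (one must exist by minimality, since $m_{n-1} \cup \{q_0\} \cup m_0$ cannot be properly embedded) and the chords passing through each $r(c_i)$, meaning crossings whose half-crossing inside $r(c_i)$ pairs with one outside $m_i$. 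Direct computation in the model case $D = S_{t,a}$ recovers exactly the chords of $S_{t,a}$ from this procedure, supporting the general prescription.

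The main obstacle I foresee is the rigidity step: proving that after restricting to the chosen chords, the resulting subdiagram is really isomorphic to some $S_{t,a}$, i.e., that its chords are all mutually parallel with uniform spacing under the reduced cyclic order on half-crossings. I expect this to follow from pushing minimality further: any deviation from the star pattern should enable a local operation on the partition, such as sliding a cut past a half-crossing or merging two adjacent arcs, producing a partition with fewer than $n$ arcs and contradicting the hypothesis. The remark preceding the theorem, that no chord $c(m)$ is properly contained in any arc $r(c(m'))$, is a first rigidity constraint of this form; fully pinning down the integer $t$ and verifying the star isomorphism will likely require iterating such constraints, possibly by running the Lemma~\ref{lmm_mnml_mxml} construction from multiple starting cuts to assemble the full complement of star chords.
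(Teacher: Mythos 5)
Your easy direction is fine and is essentially the paper's: an embedded partition of $D$ restricts to one of any subdiagram, so arc number is monotone under subdiagrams, and Lemma~\ref{lmm_str_nmbr} finishes it. The problem is the hard direction, where what you describe is a plan rather than a proof, and the plan as stated does not supply the key idea. The data you propose to extract from one minimal partition is quantitatively insufficient: a minimal partition of a diagram of arc number $a+2$ has $a+2$ arcs and hence only about $a+1$ obstructing crossings, and these form $S_a^{a+1}=S_{1,a-1}$, a star certifying arc number only $a+1$. The full star $S_{t,a}$ has $1+(a+1)t$ crossings, so the extra crossings must be found somewhere, and your prescription for finding them --- the ``merging-blocking chord'' plus all chords with one half-crossing in some $r(c_i)$ and the other outside $m_i$ --- is exactly the step you admit you cannot verify: there is no reason this static collection is star-shaped (the chords through $r(c_i)$ can be numerous, nested, and unevenly spaced), and checking the prescription on the model case $D=S_{t,a}$ does not constrain what happens in a general diagram. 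So the ``rigidity step'' you flag is not a technical loose end; it is the content of the theorem, and your proposal leaves it open.

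The paper resolves it not by selecting chords statically but dynamically: starting from the minimal partition of Theorem~\ref{thm_cmpttn}, it continues the f-arc construction of Lemma~\ref{lmm_mnml_mxml} past one full revolution, sliding cut points $p_k$ for $k>a+1$ and recording the new obstructing crossing $c_k$ at each stage. The induction maintains that $(c_1,\ldots,c_{k-1})$ is a star-ordered copy of $S_a^{k-1}$; minimality of the partition and the ``no proper containment in $r(c(m'))$'' constraint are used to show $c_k$ is distinct from all earlier $c_l$ (the only dangerous coincidence, $c_k=c_{k-a-1}$, would let $p_k$ slide to $p_{k-a-1}$ and reduce the number of arcs), and that no old half-crossing lands in $(f(c_{k-1}),b(c_k))$, which is what keeps the growing configuration a star-ordered $S_a^k$. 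Since the $c_k$ are pairwise distinct and $D$ is finite, eventually some $f(c_k)$ escapes $r(c_{k-x(a+1)})$, and at that moment the last $x(a+1)+1$ crossings close up into a copy of $S_{x,a}$. If you want to complete your write-up, this spiralling continuation (or something equivalent to it) is the missing mechanism; iterating the Lemma~\ref{lmm_mnml_mxml} construction from different starting cuts, as you suggest, only produces more copies of the one-revolution data and does not by itself assemble the additional $t$-fold winding structure of $S_{t,a}$.
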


\begin{proof}
Lemma~\ref{lmm_str_nmbr} gives one direction. For the other, the case $a=0$ is obvious. Suppose $a>0$ and $D$ has arc number $a+2$. Since $S_a^{a+1} = S_{1,a-1}$, $S_{t,a}$ contains $S_{1,b}$ for $1 \le b <a$. Thus it suffices to show that $D$ contains $S_{t,a}$ for some $t$.

Construct a minimal embedded partition $P=(p_0, \ldots p_{a+1})$ as in Theorem~\ref{thm_cmpttn}. Since for $i \in [0 \ldots a]$ the arc $(p_i, p_{i+1})$ is front maximal, each $p_i$ with $i>1$ has an associated obstructing crossing $c_{i}$. Since obstructing crossings given by an embedded partition bound disjoint f-arcs, these $c_{i}$ form a copy of $S_{1,a-1} = S_a^{a+1}$.

Now, starting at $p_{a+2}$, continue around the circle, extending embedded arcs to f-arcs by sliding the $p_k$ as was done in Lemma~\ref{lmm_mnml_mxml}. We claim that at some point the generated obstructing crossings give the desired star.

Inductively assume that at the beginning of each stage $k>a+1$, the constructed obstructing crossings $(c_1, \ldots c_{k-1})$ form a star ordered copy of $S_a^{k-1}$. Thus each $f(c_{j})$ lies in $r(c_{j-1-x (a+1)})$ for each $x>0$ such that $j-1-x (a+1) \ge 0$, and similarly each $b(c_{j})$ lies in $r(c_{j-2-x (a+1)})$ for each $x>0$ such that $j-2-x (a+1) \ge 0$.

Construct $c_{k}$ by sliding $p_{k}$ so that $(p_{k-1},p_{k})$ is maximal, and let $c_{k}$ be the associated obstructing crossing. We will show $c_k \ne c_l$ for $l < k$. First, minimality of the resulting partition and front maximality of $p_{k-a}, p_{k-a+1}, \ldots, p_{k}$ imply that $c_{k-a}, c_{k-a+1}, \ldots, c_{k}$ form a star ordered copy of $S_{1,a-1}$, the crossings of which bound disjoint arcs. Second, for each $i < k-a-1$, $f(c_i)$ and $b(c_i)$ each lie in one of $r(c_{k-a-1}), \ldots, r(c_{k-1})$, and so at least one of these lies in one of $r(c_{k-a}), \ldots, r(c_{k-1})$. These two statements imply that $c_{k} \ne c_{l}$ unless possibly $c_{k} = c_{k-a-1}$. In this case, however, $p_{k}$ may be slid all the way to $p_{k-a-1}$, contradicting minimality of the partition. Thus for $l < k$, $c_{j_k} \neq c_{j_l}$.

Consider the locations of $b(c_{k})$ and $f(c_{k})$ relative to the other $c_l$ at stage $k$. There can be no crossing $c_l$ such that $b(c_l)$ lies in $(f(c_{k-1}),b(c_k))$ since otherwise, to avoid proper containment, $f(c_l)$ would lie behind $f(c_k)$ and $c_l$ would be the obstructing crossing at the $k$th stage, not $c_{k}$. Similarly, there can be no $f(c_l) \in (f(c_{k-1}), b(c_k))$, as otherwise either $b(c_{l+1}) \in (b(c_k), f(c_{k-1}))$, in which case $c_{l+1}$ is the obstructing crossing at stage $k$, not $c_{j_k}$, or else $b(c_k) \in (f(c_l),b(c_{l+1}))$, in which case $c_k$ would have been the obstructing crossing at stage $l+1$, rather than $c_{l+1}$. Thus no $c_l$ for $l < k$ has half-crossings in $(f(c_{k-1}), b(c_k))$.

Next, either $f(c_k)$ lies in $r(c_{k-x (a+1)})$ for all $x>0$ such that $k-x (a+1) \ge 0$, or there is a least one $x>0$ such that  $k-x (a+1) \ge 0$ and $f(c_k) \notin r(c_{k-x (a+1)})$. In the first case, the resulting diagram is $S_a^k$. In the second case, since $c_k$ does not contain $c_{k-x(a+1)}$,  $f(c_k)$ lies between$b(c_{k-x (a+1)})$ and $b(c_k)$. In this case the crossings $c_{j_{k-x(a+1)}}, c_{j_{k-x (a+1)+1}}, \ldots, c_{j_k}$ form $S_{x,a}$. Since the number of crossings is finite and the $c_i$ are all distinct, the second case must eventually occur.
\end{proof}

\end{document}